\newtheorem*{thm*}{Theorem}
\newtheorem{thm}{Theorem}
\newtheorem{lem}[thm]{Lemma}
\newtheorem{obs}[thm]{Observation}
\newtheorem{cor}[thm]{Corollary}
\newtheorem{ques}[thm]{Question}
\newcommand{\N}{\mathbb{N}}
\begin{document}

\title{An Algebraic Approach for Counting DP-3-colorings of Sparse Graphs
%An Algebraic Approach to the Non-chromatic Adherence of the DP Color Function
}

\author{Samantha L. Dahlberg$^1$, Hemanshu Kaul$^2$, and Jeffrey A. Mudrock$^3$}

\footnotetext[1]{Department of Applied Mathematics, Illinois Institute of Technology, Chicago, IL 60616.  E-mail:  {\tt {sdahlberg@iit.edu}}}
\footnotetext[2]{Department of Applied Mathematics, Illinois Institute of Technology, Chicago, IL 60616.  E-mail:  {\tt {kaul@iit.edu}}}
\footnotetext[3]{Department of Mathematics, College of Lake County, Grayslake, IL 60030.  E-mail:  {\tt {jmudrock@clcillinois.edu}}}

\maketitle

\begin{abstract}
DP-coloring (or correspondence coloring) is a generalization of list coloring that has been widely studied since its introduction by Dvo\v{r}\'{a}k and Postle in 2015. As the analogue of the chromatic polynomial of a graph $G$, $P(G,m)$, and the list color function, $P_{\ell}(G,m)$, the DP color function of $G$, denoted by $P_{DP}(G,m)$, counts the minimum number of DP-colorings over all possible $m$-fold covers. It follows that $P_{DP}(G,m) \le P_{\ell}(G,m) \le P(G,m)$.
A function $f$ is chromatic-adherent if for every graph $G$, $f(G,a) = P(G,a)$ for some $a \geq \chi(G)$ implies that $f(G,m) = P(G,m)$ for all $m \geq a$.  It is known that the DP color function is not chromatic-adherent, but there are only two known graphs that demonstrate this.  
Suppose $G$ is an $n$-vertex graph and $\mathcal{H}$ is a 3-fold cover of $G$, in this paper we associate with $\mathcal{H}$ a polynomial $f_{G, \mathcal{H}} \in \mathbb{F}_3[x_1, \ldots, x_n]$ so that the number of non-zeros of $f_{G, \mathcal{H}}$ equals the number of $\mathcal{H}$-colorings of $G$.  We then use a well-known result of Alon and F\"{u}redi on the number of non-zeros of a polynomial to establish a non-trivial lower bound on $P_{DP}(G,3)$ when $2n > |E(G)|$. An easy consequence of this is that $P_{DP}(G, 3) \geq 3^{n/6}$ for every $n$-vertex planar graph $G$ of girth at least 5, improving the previously known bounds on both $P_{DP}(G, 3)$ and $P_{\ell}(G, 3)$. Finally, we use this bound to show that there are infinitely many graphs that demonstrate the non-chromatic-adherence of the DP color function.

\medskip
  
\noindent {\bf Keywords.}   DP-coloring, correspondence coloring, DP color function, graph polynomial

\noindent \textbf{Mathematics Subject Classification.} 05C15, 05C25, 05C31, 05C69, 11T06

\end{abstract}

\section{Introduction}\label{intro}

In this paper all graphs are nonempty, finite, undirected loopless multigraphs.  For the purposes of this paper, a simple graph is a multigraph without any parallel edges between vertices.  Generally speaking we follow West~\cite{W01} for terminology and notation.  The set of natural numbers is $\N = \{1,2,3, \ldots \}$.  For $m \in \N$, we write $[m]$ for the set $\{1, \ldots, m \}$.  If $G$ is a graph and $S, U \subseteq V(G)$, we use $G[S]$ for the subgraph of $G$ induced by $S$, and we use $E_G(S, U)$ for the set consisting of all the edges in $E(G)$ that have one endpoint in $S$ and the other in $U$. When $u,v \in V(G)$ we use $E_G(u,v)$ to denote the set of edges in $E(G)$ with endpoints $u$ and $v$ (note $E_G(u,v) = E_G(v,u)$), and we let $e_G(u,v)$ denote the number of elements in $E_G(u,v)$.  When $G$ is a multigraph, the \emph{underlying graph of $G$} is the simple graph formed by deleting all parallel edges of $G$.  When $G$ is a simple graph, we can refer to edges by their endpoints; for example, if $u$ and $v$ are adjacent in the simple graph $G$, $uv$ or $vu$ refers to the edge between $u$ and $v$.  We say that a multigraph $G$ is $k$-degenerate if every subgraph of $G$ has a vertex of degree at most $k$.  If $G$ and $H$ are vertex disjoint multigraphs, we write $G \vee H$ for the join of $G$ and $H$. 

\subsection{DP-Coloring}

In classical vertex coloring we wish to color the vertices of a graph $G$ with up to $m$ colors from $[m]$ so that adjacent vertices receive different colors, a so-called \emph{proper $m$-coloring}. The smallest $k$ for which a proper $k$-coloring of $G$ exists is called the \emph{chromatic number} of $G$, and it is denoted $\chi(G)$.  List coloring is a well-known variation on classical vertex coloring that was introduced independently by Vizing~\cite{V76} and Erd\H{o}s, Rubin, and Taylor~\cite{ET79} in the 1970s.  For list coloring, we associate a \emph{list assignment} $L$ with a graph $G$ such that each vertex $v \in V(G)$ is assigned a list of colors $L(v)$.  Then, $G$ is \emph{$L$-colorable} if there is a proper coloring $f$ of $G$ such that $f(v) \in L(v)$ for each $v \in V(G)$ (we say $f$ is a \emph{proper $L$-coloring} of $G$).  A list assignment $L$ is called an \emph{$m$-assignment} for $G$ if $|L(v)|=m$ for each $v \in V(G)$.  We say $G$ is \emph{$m$-choosable} if $G$ is $L$-colorable whenever $L$ is an $m$-assignment for $G$.  

In 2015, Dvo\v{r}\'{a}k and Postle~\cite{DP15} introduced a generalization of list coloring called DP-coloring (they called it correspondence coloring) in order to prove that every planar graph without cycles of lengths 4 to 8 is 3-choosable. DP-coloring has been extensively studied over the past 7 years (see e.g.,~\cite{B17, BK182, KM20, KO18, LLYY19, M18}). Intuitively, DP-coloring is a variation on list coloring where each vertex in the graph still gets a list of colors, but identification of which colors are different can change from edge to edge.  Due to this property, DP-coloring multigraphs is not as simple as coloring the corresponding underlying graph (see~\cite{BKP17}).  Following~\cite{M22}, we now give the formal definition.  Suppose $G$ is a multigraph.  A \emph{cover} of $G$ is a triple $\mathcal{H} = (L,H,M)$ where $L$ is a function that assigns to each $v \in V(G)$ a set $L(v)=\{(v,a) : a \in A_v \}$ where $A_v$ is some nonempty finite set, $H$ is a multigraph with vertex set $\bigcup_{v \in V(G)} L(v)$, and $M$ is a function that assigns to each $e \in E(G)$ a matching $M(e)$ with the property that each edge in $M(e)$ has one endpoint in $L(u)$ and the other endpoint in $L(v)$ where $u$ and $v$ are the endpoints of $e$.  Moreover, $L$, $H$, and $M$ satisfy the following conditions: 

\vspace{5mm}

\noindent (1) For every $u \in V(G)$, $H[L(u)]=K_{|L(u)|}$; \\
(2)  For distinct edges $e_1, e_2 \in E(G)$, $M(e_1) \cap M(e_2) = \emptyset$; \\
(3)  For distinct vertices $u, v \in V(G)$, the set of edges between $L(u)$ and $L(v)$ in $H$ is $\bigcup_{e \in E_G(u,v)} M(e)$. 

\vspace{5mm}

Note that by conditions (2) and (3) in the above definition $H$ may contain parallel edges.  Furthermore, note that if $G$ is a simple graph, $H$ must be simple.  

Suppose $\mathcal{H} = (L,H,M)$ is a cover of $G$.  An \emph{$\mathcal{H}$-coloring} of $G$ is an independent set in $H$ of size $|V(G)|$.  It is immediately clear that an independent set $I \subseteq V(H)$ is an $\mathcal{H}$-coloring of $G$ if and only if $|I \cap L(u)|=1$ for each $u \in V(G)$.  We say $\mathcal{H}$ is \emph{$m$-fold} if $|L(u)|=m$ for each $u \in V(G)$.  Moreover, we say that $\mathcal{H}$ is a \emph{full $m$-fold cover} of $G$ if $|E_H(L(u),L(v))|= e_G(u,v) m$ whenever $u$ and $v$ are distinct vertices of $G$.  The \emph{DP-chromatic number} of $G$, denoted $\chi_{DP}(G)$, is the smallest $k$ such that an $\mathcal{H}$-coloring of $G$ exists whenever $\mathcal{H}$ is a $k$-fold cover of $G$.  Clearly, $\chi(G) \leq \chi_{DP}(G)$, and if $G$ is $d$-degenerate, then  $\chi_{DP}(G) \leq d+1$.

It is easy to demonstrate that DP-coloring is a generalization of list coloring.  Suppose that $K$ is an $m$-assignment for the simple graph $G$.  For each $v \in V(G)$, let $L(v) = \{ (v,j) : j \in K(v) \}$.  For each $uv \in E(G)$, let $M(uv) = \{(u,j)(v,j) : j \in L(u) \cap L(v) \}$.  Finally, let $H$ be the graph with vertex set $\bigcup_{v \in V(G)} L(v)$ and edge set that is the union of $\bigcup_{uv \in E(G)} M(uv)$ and $\left( \bigcup_{v \in V(G)} \bigcup_{i,j \in K(v), i \neq j} \{(v,i)(v,j)\} \right)$.  Now, let $\mathcal{H} = (L,H,M)$ and note that $\mathcal{H}$ is an $m$-fold cover of $G$.  Then, if $\mathcal{I}$ is the set of $\mathcal{H}$-colorings of $G$ and $\mathcal{C}$ is the set of proper $L$-colorings of $G$, the function $f: \mathcal{C} \rightarrow \mathcal{I}$ given by $f(c) = \{ (v, c(v)) : v \in V(G) \}$ is a bijection.  
 
\subsection{The DP Color Function}

In 1912 Birkhoff introduced the notion of the chromatic polynomial in hopes of using it to make progress on the four color problem.  For $m \in \N$, the \emph{chromatic polynomial} of a graph $G$, $P(G,m)$, is the number of proper $m$-colorings of $G$.  It can be shown that $P(G,m)$ is a polynomial in $m$ of degree $|V(G)|$ (see~\cite{B12}).  For example, $P(K_n,m) = \prod_{i=0}^{n-1} (m-i)$ and $P(T,m) = m(m-1)^{n-1}$ whenever $T$ is a tree on $n$ vertices (see~\cite{W01}).   

The notion of chromatic polynomial was extended to list coloring in the 1990s~\cite{AS90}.   In particular, if $L$ is a list assignment for $G$, we use $P(G,L)$ to denote the number of proper $L$-colorings of $G$. The \emph{list color function} $P_\ell(G,m)$ is the minimum value of $P(G,L)$ where the minimum is taken over all $m$-assignments $L$ for $G$.  It is clear that $P_\ell(G,m) \leq P(G,m)$ for each $m \in \N$ since we must consider the $m$-assignment that assigns $[m]$ to each vertex of $G$ when considering all possible $m$-assignments for $G$.  In general, the list color function can differ significantly from the chromatic polynomial for small values of $m$.  However, for large values of $m$, Dong and Zhang~\cite{DZ22} (improving upon results in~\cite{D92}, \cite{T09}, and~\cite{WQ17}) showed that for any graph $G$ with at least 4 edges, $P_{\ell}(G,m)=P(G,m)$ whenever $m \geq |E(G)|-1$.

With this in mind, we are ready to define a notion that will be important later in this paper.   Let $\mathcal{G}$ be the set of all finite multigraphs.  We say a function $f: \mathcal{G} \times \mathbb{N} \rightarrow \mathbb{N}$ is \emph{chromatic-adherent} if for every graph $G$,  $f(G,a) = P(G,a)$ for some $a \geq \chi(G)$ implies that $f(G,m) = P(G,m)$ for all $m \geq a$.  It is unknown whether the list color function is chromatic-adherent.

\begin{ques} [\cite{KN16}] \label{ques: funlist}
Is $P_{\ell}$ chromatic-adherent?
\end{ques} 

In 2019, the second and third author introduced a DP-coloring analogue of the chromatic polynomial called the DP color function in hopes of gaining a better understanding of DP-coloring and using it as a tool for making progress on some open questions related to the list color function~\cite{KM19}.  Since its introduction in 2019, the DP color function has received some attention in the literature (see e.g.,~\cite{BH21, DY21, HK21, KM21, LY22, MT20}). 

Suppose $\mathcal{H} = (L,H,M)$ is a cover of a multigraph $G$.  Let $P_{DP}(G, \mathcal{H})$ be the number of $\mathcal{H}$-colorings of $G$.  Then, the \emph{DP color function} of $G$, $P_{DP}(G,m)$, is the minimum value of $P_{DP}(G, \mathcal{H})$ where the minimum is taken over all full $m$-fold covers $\mathcal{H}$ of $G$.~\footnote{We take $\N$ to be the domain of the DP color function of any multigraph.  Also, we can restrict our attention to full $m$-fold covers since adding edges to a graph can't increase the number of independent sets of some prescribed size.}  It is easy to show that for any $m \in \N$, $$P_{DP}(G, m) \leq P_\ell(G,m) \leq P(G,m).$$ 

Finding lower bounds on both $P_{DP}(G,m)$ and $P_{\ell}(G,m)$ in the case that $G$ is planar and $m \in \{3,4,5\}$ has been considered in the literature (see e.g.,~\cite{BG22, S22, T07}).  In 2007, Thomassen~\cite{T07} gave an intricate proof showing that $P_{\ell}(G, 3) \geq 2^{n/10000}$ when $G$ is an $n$-vertex planar graph of girth at least 5.  This was recently improved as follows, and we will give a further  improvement below.

\begin{thm} [\cite{S22}] \label{thm: exponent}
If $G$ is an $n$-vertex planar graph with $n \geq 2$ and girth at least 5, then $P_{DP}(G, 3) \geq 2^{(n+890)/292}$.
\end{thm}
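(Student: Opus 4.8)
The plan is to prove the statement by strong induction on $n$, in the stronger form that $P_{DP}(G,\mathcal{H})\ge 2^{(n+890)/292}$ for \emph{every} full $3$-fold cover $\mathcal{H}$ of every $n$-vertex planar graph $G$ of girth at least $5$; this automatically re-proves $3$-DP-colorability of such graphs. Let $(G,\mathcal{H})$ be a counterexample with $n$ minimum. First I would run a few \emph{clean-up reductions}. If $G$ is disconnected, split off a component to get $G_1,G_2$ on $n_1,n_2$ vertices: $P_{DP}(G,\mathcal{H})= P_{DP}(G_1,\mathcal{H}_1)P_{DP}(G_2,\mathcal{H}_2)\ge 2^{(n_1+890)/292}2^{(n_2+890)/292}\ge 2^{(n+890)/292}$, the last step holding because $(n_1+890)+(n_2+890)\ge n_1+n_2+890$. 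If $\deg_G(v)\le 1$, delete $v$: any $\mathcal{H}'$-coloring of $G-v$ extends in $\ge 3-\deg_G(v)\ge 2$ ways, so $P_{DP}(G,\mathcal{H})\ge 2\,P_{DP}(G-v,\mathcal{H}')\ge 2\cdot 2^{(n-1+890)/292}\ge 2^{(n+890)/292}$ since $1\ge 1/292$. If $G$ has two adjacent degree-$2$ vertices $v_1v_2$ — whose other neighbours are distinct and non-adjacent since girth $\ge 5$ — delete both: a coloring of $G-\{v_1,v_2\}$ extends in $\ge 2$ ways ($\ge 2$ choices at $v_1$, then $\ge 1$ at $v_2$), and $1\ge 2/292$ again closes the induction. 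More of the slack in the constant $890$ is spent the same way to force additional structure ($2$-connectedness, control of $2$-cuts and of short separating cycles), so the minimal counterexample may be assumed $2$-connected with $\delta(G)\ge 2$ and no two adjacent degree-$2$ vertices.

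The core is then a discharging argument on this reduced $G$. Assign each vertex the charge $\deg_G(v)-\tfrac{10}{3}$; Euler's formula plus girth $\ge 5$ gives $|E(G)|\le\tfrac{5}{3}(n-2)$, so the total charge is at most $-\tfrac{20}{3}<0$. Choose rules that move charge from vertices of degree $\ge 4$ to nearby vertices of degree $2$ and $3$, and prove that unless $G$ contains one of a fixed finite list of configurations every vertex ends nonnegative — contradicting the negative total. It then remains to show each listed configuration is \emph{quantitatively reducible}: it contains a set $S$ of at most $292$ vertices (or at most $292k$ vertices for some $k\ge 1$) such that every $\mathcal{H}$-coloring of $G-S$ extends to at least $2$ (respectively $2^k$) $\mathcal{H}$-colorings of $G$, whence minimality gives $P_{DP}(G,\mathcal{H})\ge 2\cdot 2^{(n-|S|+890)/292}\ge 2^{(n+890)/292}$. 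The model reducible configurations are: an induced path of $\ge 2$ consecutive degree-$2$ vertices (deleting such a path of length $t$ gains a factor $2^{t-1}$, since each internal vertex has $\ge 2$ choices along the path and only the last is doubly constrained), a degree-$3$ vertex with several low-degree neighbours, and a short cycle carrying many degree-$2$ vertices. The precise value $292$ is simply the outcome of optimizing ``vertices deleted per guaranteed doubling'' over the worst configuration on the list, and $890$ is whatever is needed to cover the clean-up reductions and the finitely many irreducible base cases — cruder versions of both bookkeeping tasks are what gave Thomassen's $10000$ in \cite{T07}.

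The step I expect to be the main obstacle is proving quantitative reducibility of each configuration, which is genuinely harder for DP-coloring than for ordinary or list coloring. One cannot use the usual ``two neighbours of $v$ share a colour, so $v$ has a free colour'' saving: in a cover the two matchings at $v$ can forbid two \emph{distinct} vertices of $L(v)$ no matter what the colour names are, so the na\"ive extension count can drop to $0$ where for ordinary $3$-coloring it would be $\ge 1$. So for each configuration one must argue directly about the matchings of $\mathcal{H}$ — typically an alternating-path / Kempe-chain exchange confined to the configuration, or a short case analysis of how the relevant matchings compose — to get the claimed number of extensions for \emph{every} cover. A secondary, lengthy-but-routine obstacle is the usual discharging bookkeeping: verifying the chosen rules really do certify nonnegative final charge everywhere, i.e.\ that the configuration list is complete. (The algebraic approach developed later in this paper avoids both difficulties, which is how it reaches the stronger bound $P_{DP}(G,3)\ge 3^{n/6}$.)
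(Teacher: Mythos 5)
First, a point of order: the paper does not prove this statement at all. Theorem~\ref{thm: exponent} is quoted from Smith-Roberge's thesis \cite{S22}, and the paper's own contribution is to supersede it with the stronger bound $P_{DP}(G,3)\ge 3^{n/6}$ (Corollary~\ref{cor:planar}), obtained as an immediate consequence of Theorem~\ref{thm: count3}, which follows from the Alon--F\"{u}redi theorem applied to the polynomial $f_{G,\mathcal{H}}$ over $\mathbb{F}_3$ together with the girth-5 planar edge bound $|E(G)|\le 5n/3$. So there is no in-paper proof to measure your argument against; the relevant comparison is with the external discharging proofs of Thomassen and Smith-Roberge, whose general shape you have correctly reconstructed.

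That said, what you have written is a roadmap rather than a proof, and the missing pieces are not routine. All of the mathematical content of a Thomassen-style argument lives in (i) the explicit finite list of reducible configurations, (ii) the explicit discharging rules and the verification that a configuration-free graph has everywhere-nonnegative charge, and (iii) the cover-by-cover extension counts for each configuration. None of these are supplied, and (iii) is precisely the step you yourself concede can fail for DP-coloring where it would not for list coloring: for instance, your claim that deleting a path of $t$ consecutive degree-2 vertices gains a factor $2^{t-1}$ requires an argument about how the matchings along the path compose, since in a full 3-fold cover the two constraints arriving at the final vertex can forbid two \emph{distinct} elements of its list no matter how the earlier vertices were colored, so one must show the total number of completions, summed over the earlier choices, is still at least $2^{t-1}$. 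Likewise the constants $292$ and $890$ cannot be ``whatever is needed'': they are determined by the worst configuration on the list, so until the list is written down and each entry is proved quantitatively reducible, the stated inequality is not established. The preliminary reductions you do carry out (components, degree-$\le 1$ vertices, adjacent degree-2 vertices) are fine, but they are the easy part. If the goal is the displayed bound or better for this class of graphs, the paper's algebraic route is both complete and far shorter.
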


Unlike the list color function, it is known that $P_{DP}(G,m)$ does not necessarily equal $P(G,m)$ for sufficiently large $m$.  Indeed, Dong and Yang recently generalized a result of Kaul and Mudrock~\cite{KM19} and showed the following. 

\begin{thm} [\cite{DY21}] \label{thm: evencycle}
If $G$ is a simple graph that contains an edge $e$ such that the length of a shortest cycle containing $e$ is even, then there exists an $N \in \N$ such that $P_{DP}(G,m) < P(G,m)$ whenever $m \geq N$.  
\end{thm}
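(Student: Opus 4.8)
The plan is to produce, for each large enough $m$, a single full $m$-fold cover $\mathcal{H}$ of $G$ witnessing $P_{DP}(G,\mathcal{H})<P(G,m)$; since $P_{DP}(G,m)\le P_{DP}(G,\mathcal{H})$, this gives the theorem. Write $n=|V(G)|$, let $e=xy$, and let $g$ be the length of a shortest cycle of $G$ through $e$, so $g$ is even and $g\ge 4$ by hypothesis. I would take $\mathcal{H}=(L,H,M)$ to be the \emph{canonical} cover, where $L(v)=\{v\}\times[m]$ and $M(uv)=\{(u,a)(v,a):a\in[m]\}$, except that $M(e)$ is replaced by the \emph{shifted} matching $\{(x,a)(y,a+1):a\in[m]\}$ (indices mod $m$). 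One checks at once that $\mathcal{H}$ is a full $m$-fold cover and that its $\mathcal{H}$-colorings are exactly the maps $c\colon V(G)\to[m]$ that are proper on $G-e$ (the graph obtained by deleting $e$) and satisfy $c(x)\ne c(y)+1$. Hence, letting $R(m)$ count the maps proper on $G-e$ with $c(x)=c(y)+1$, we have $P_{DP}(G,\mathcal{H})=P(G-e,m)-R(m)$.

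Next I would evaluate $R(m)$ by a color-symmetry argument. A map proper on $G-e$ with $c(x)\ne c(y)$ is precisely a proper coloring of $G$, and since $x$ and $y$ are adjacent in $G$, color permutations act transitively on the resulting ordered pairs $(c(y),c(x))$; so for each fixed pair of distinct colors the number of proper colorings of $G$ realizing it is $P(G,m)/(m(m-1))$. As $a\mapsto a+1$ is fixed-point-free, summing over the $m$ choices of $c(y)$ gives $R(m)=m\cdot P(G,m)/(m(m-1))=P(G,m)/(m-1)$. Feeding this and deletion--contraction, $P(G-e,m)=P(G,m)+P(G\cdot e,m)$ (where $G\cdot e$ is the contraction, regarded as a simple graph), into $P_{DP}(G,\mathcal{H})=P(G-e,m)-R(m)$ produces the clean identity
\[
 P_{DP}(G,\mathcal{H})=P(G,m)+T(m),\qquad T(m):=P(G\cdot e,m)-\frac{P(G,m)}{m-1}.
\]
So it suffices to show $T(m)<0$ for all large $m$. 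Using Whitney's rank expansion $P(G,m)=\sum_{S\subseteq E(G)}(-1)^{|S|}m^{c(V,S)}$ (with $c(V,S)$ the number of components of $(V(G),S)$) and splitting on whether $e\in S$, one obtains $P(G,m)/(m-1)=\sum_{S\subseteq E(G)\setminus\{e\},\ x\not\sim_S y}(-1)^{|S|}m^{c(V,S)-1}$, where $x\sim_S y$ means $x$ and $y$ lie in the same component of $(V(G),S)$ (in particular $(m-1)\mid P(G,m)$, so the division is legitimate). Expanding $P(G\cdot e,m)$ by the same inclusion--exclusion yields the same $x\not\sim_S y$ sum together with $\sum_{S:\,x\sim_S y}(-1)^{|S|}m^{c(V,S)}$, so after cancellation
\[
 T(m)=\sum_{\substack{S\subseteq E(G)\setminus\{e\}\\ x\sim_S y}}(-1)^{|S|}m^{c(V,S)}.
\]

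The final step is to extract the leading term of this polynomial. Because no cycle of $G$ through $e$ is shorter than $g$, the distance in $G-e$ from $x$ to $y$ is exactly $g-1$; hence every $S$ with $x\sim_S y$ contains a path of length $\ge g-1$, so has cycle-matroid rank $\ge g-1$ and $c(V,S)\le n-g+1$. Moreover, if $x\sim_S y$ and $S$ has rank exactly $g-1$, then $S$ must itself be a shortest $x$--$y$ path $P$ of $G-e$: such a $P$ is induced (a chord, joining two nonconsecutive vertices of $P$, would give a strictly shorter $x$--$y$ walk), so $P$ is a flat of the cycle matroid of $G-e$ and equals its own closure, forcing any $S\supseteq P$ of the same rank to equal $P$. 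Consequently the coefficient of $m^{\,n-g+1}$ in $T(m)$ is $(-1)^{g-1}p_e$, where $p_e\ge 1$ is the number of shortest $x$--$y$ paths of $G-e$ (equivalently, of $g$-cycles of $G$ through $e$). Since $g$ is even, this leading coefficient equals $-p_e<0$, so $T(m)\to-\infty$; choosing $N\in\N$ with $T(m)<0$ for all $m\ge N$ then gives $P_{DP}(G,m)\le P_{DP}(G,\mathcal{H})=P(G,m)+T(m)<P(G,m)$ for all $m\ge N$. The main obstacle is exactly this last paragraph --- tying the degree of $T$ to the girth through $e$ and determining the sign of its leading coefficient; everything before it is routine bookkeeping. (This also explains why evenness of $g$ is essential: if $g$ were odd the same construction would give leading coefficient $+p_e>0$, so the shifted cover would not help.)
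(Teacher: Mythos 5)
Theorem~\ref{thm: evencycle} is quoted in this paper from Dong and Yang~\cite{DY21} and is not proved here, so there is no in-paper proof to compare against; judged on its own, your argument is correct and complete, and it essentially reconstructs the known approach: perturb the canonical cover by a single cyclic shift on $M(e)$, reduce to the sign of $T(m)=P(G\cdot e,m)-P(G,m)/(m-1)$, and read off the leading coefficient $(-1)^{g-1}p_e$ of the Whitney-type expansion $\sum_{S\subseteq E(G)\setminus\{e\},\,x\sim_S y}(-1)^{|S|}m^{c(V,S)}$ via the induced-path/flat argument. All the steps check out (the only quibble is the immaterial orientation flip between $c(y)\ne c(x)+1$ and $c(x)\ne c(y)+1$), so this can stand as a self-contained proof of the cited result.
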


It was recently shown that the DP color function is not chromatic-adherent.   A \emph{Generalized Theta graph} $\Theta(l_1, \ldots, l_n)$ consists of a pair of end vertices joined by $n$ internally disjoint paths of lengths $l_1, \ldots, l_n \in \N$.  It is easy to see that $\chi_{DP}(\Theta(l_1, \ldots, l_n))=3$ whenever $n \geq 2$.

\begin{thm} [\cite{BK21}] \label{thm: negative}
If $G$ is $\Theta(2,3,3,3,2)$ or $\Theta(2,3,3,3,3,3,2,2)$, then $P_{DP}(G,3)=P(G,3)$ and there is an $N \in \N$ such that $P_{DP}(G,m) < P(G,m)$ for all $m \geq N$. 
\end{thm}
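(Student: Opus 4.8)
For the second assertion (the strict inequality $P_{DP}(G,m)<P(G,m)$ for large $m$), note that in each of the two graphs an edge $e$ on one of the length-$2$ branches lies on a $4$-cycle — that branch together with a second length-$2$ branch — and no cycle through $e$ is shorter, since the two branch vertices are nonadjacent; hence the shortest cycle through $e$ is even, and Theorem~\ref{thm: evencycle} supplies the required $N$. Thus the real content is the identity $P_{DP}(G,3)=P(G,3)$, and since $P_{DP}(G,3)\le P(G,3)$ always, what must be shown is $P_{DP}(G,\mathcal H)\ge P(G,3)$ for every full $3$-fold cover $\mathcal H$ of $G$.

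Write $G=\Theta(l_1,\dots,l_n)$ with branch vertices $u,v$, and let $n_2,n_3$ count the branches of length $2$ and $3$; so $(n,n_2,n_3)=(5,2,3)$ for $\Theta(2,3,3,3,2)$ and $(8,3,5)$ for $\Theta(2,3,3,3,3,3,2,2)$. In a full $3$-fold cover each edge carries a perfect matching between two $3$-element fibers, i.e.\ a permutation of $[3]$; since the internal fibers of a branch lie on no other branch, relabeling them collapses the matchings along branch $p$ into a single net permutation $\pi_p\in S_3$ from $L(u)$ to $L(v)$, and the number of $\mathcal H$-colorings depends only on $(\pi_1,\dots,\pi_n)$ — indeed only on its orbit under relabeling $L(u)$ and $L(v)$. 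Because the branches meet only at $u$ and $v$, once $\alpha\in L(u)$ and $\beta\in L(v)$ are fixed the $\mathcal H$-colorings factor over branches, and a branch of length $\ell$ with net permutation $\pi$ contributes $d_\ell:=\tfrac{(m-1)^\ell+(-1)^\ell(m-1)}{m}$ extensions if $\beta=\pi(\alpha)$ and $o_\ell:=\tfrac{(m-1)^\ell-(-1)^\ell}{m}$ otherwise. Hence
\[
P_{DP}(G,\mathcal H)=\sum_{\alpha,\beta\in[3]}\ \prod_{p=1}^{n}\bigl(d_{l_p}\ \text{if}\ \beta=\pi_p(\alpha),\ \ o_{l_p}\ \text{otherwise}\bigr),
\]
and at $m=3$ we have $d_2=d_3=2$, $o_2=1$, $o_3=3$. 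The trivial cover $\pi_1=\cdots=\pi_n=\mathrm{id}$ gives $3\cdot 2^{n}+6\cdot 3^{n_3}$, which equals $P(G,3)=\bigl(m\prod_i d_{l_i}+m(m-1)\prod_i o_{l_i}\bigr)\big|_{m=3}$ — that is $258$ for $\Theta(2,3,3,3,2)$ and $2226$ for $\Theta(2,3,3,3,3,3,2,2)$. So everything reduces to showing the trivial cover minimizes the displayed sum.

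Encoding each $\pi_p$ as a permutation matrix, the sum becomes $3^{n_3}\sum_{c}2^{a_c}(2/3)^{b_c}$, the sum over the nine cells $c=(\alpha,\beta)$, where $a_c$ and $b_c$ count the length-$2$ and length-$3$ branches whose matrix is $1$ at $c$; the two matrices $(a_c)$ and $(b_c)$ range independently over all $3\times3$ nonnegative integer matrices with every line sum equal to $n_2$, resp.\ $n_3$ (each realizable, by the integer Birkhoff--von Neumann theorem). The intended mechanism is a smoothing argument: the length-$2$ branches should pile onto one permutation pattern, anchoring a ``diagonal'', and then, since $2/3<1$, the length-$3$ branches should pile onto that same pattern rather than spread out. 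The main obstacle is that this optimum is razor-thin — the real relaxation of the minimization has strictly smaller value (even a crude AM--GM bound over the nine cells falls just short of $258$, resp.\ $2226$), so integrality is essential and no purely convexity-based argument can succeed; moreover for $\Theta(3,3,3)$, which has no length-$2$ branch, the trivial cover is \emph{not} optimal, so the argument must genuinely use $n_2\ge 2$ and the exact values of $n_2,n_3$. I expect the cleanest route is: (i) show that replacing the length-$2$ permutations by $n_2$ copies of a single permutation cannot increase the sum; (ii) with the length-$2$ matrix fixed at $n_2 I$, use a term-by-term exchange argument on the length-$3$ matrix to reduce to a short list of ``piled'' configurations; and (iii) check the finitely many survivors against $3\cdot 2^{n}+6\cdot 3^{n_3}$ for $(n_2,n_3)=(2,3)$ and $(3,5)$ — a bounded verification that for the $8$-branch graph is most safely done by computer.
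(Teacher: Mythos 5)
This theorem is not proved in the paper at all; it is quoted from \cite{BK21}, so there is no internal proof to compare against. Your treatment of the second assertion is correct and is exactly the mechanism the paper itself uses elsewhere (an edge of a length-$2$ branch lies on a $4$-cycle and on no shorter cycle, so Theorem~\ref{thm: evencycle} applies). Your setup for the first assertion is also sound: collapsing each branch to a net permutation, the product formula for extensions along a path with endpoints fixed, the values $d_2=d_3=2$, $o_2=1$, $o_3=3$, the identification of the trivial cover's count with $P(G,3)$ ($258$ and $2226$), and the reformulation as minimizing $3^{n_3}\sum_c 2^{a_c}(2/3)^{b_c}$ over pairs of line-sum-constrained integer matrices are all correct.

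The gap is that the heart of the theorem --- that no full $3$-fold cover beats the trivial one, i.e.\ that the displayed minimum really is $3\cdot 2^n+6\cdot 3^{n_3}$ --- is never established. You correctly diagnose why it is hard (the optimum is tight enough that convexity/AM--GM relaxations fall short, and $\Theta(3,3,3)$ shows the conclusion is sensitive to the exact branch multiset), but the three-step plan you offer in place of a proof is not executable as written. In particular, step (i) is false as a standalone smoothing move: for a \emph{fixed} length-$3$ matrix $(b_c)$, the map $t\mapsto 2^t$ is convex, so concentrating the length-$2$ permutations onto a single pattern can \emph{increase} $\sum_c 2^{a_c}w_c$ (e.g.\ when the weights $w_c$ are nearly equal); the optimization of $(a_c)$ and $(b_c)$ is genuinely coupled and cannot be decoupled in the order you propose. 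Steps (ii) and (iii) are then deferred to an unspecified exchange argument and a computer check that is not carried out. As it stands the proposal proves only the ``eventually strict'' half of the statement; the equality $P_{DP}(G,3)=P(G,3)$ remains an announced reduction rather than a proof.
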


Wtih Theorem~\ref{thm: negative} in mind, the authors of~\cite{BK21} remarked that the two graphs in Theorem~\ref{thm: negative} are the only examples that they know of that demonstrate that the DP color function is not chromatic-adherent.  In this paper we show that there are infinitely many graphs with this property.

\subsection{Outline of Paper}

The proofs of our results are algebraic.  We begin by extending an algebraic technique for analyzing full 3-fold covers described in~\cite{KM20} to multigraphs.  This technique along with the following well-known result of Alon and F\"{u}redi will allow us to establish a non-trivial lower bound on $P_{DP}(G,3)$ for certain graphs $G$.

\begin{thm} [\cite{AF93}] \label{thm: AandF} 
Let $\mathbb{F}$ be an arbitrary field, let $A_1$, $A_2$, $\ldots$, $A_n$ be any non-empty subsets of $\mathbb{F}$, and let $B = \prod_{i=1}^n A_i$.  Suppose that $P \in \mathbb{F}[x_1, \ldots, x_n]$ is a polynomial of degree $d$ that does not vanish on all of $B$.  Then, the number of points in $B$ for which $P$ has a non-zero value is at least $\min \prod_{i=1}^n q_i$ where the minimum is taken over all integers $q_i$ such that $1 \leq q_i \leq |A_i|$ and $\sum_{i=1}^n q_i \geq -d + \sum_{i=1}^n |A_i|$.
\end{thm}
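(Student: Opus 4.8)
The statement is the Alon--F\"{u}redi theorem, and the plan is to prove it by induction on $n$ after a harmless normalization of $P$. Write $a_i=|A_i|$ and, for an integer $e\ge 0$, let $N(e;a_1,\dots,a_n)$ be the quantity $\min\prod_{i=1}^n q_i$ from the statement with $d$ replaced by $e$. Two elementary facts about $N$ drive the argument: it is non-increasing in $e$, and, by fixing the value of $q_n$ and rewriting the leftover constraint on $q_1,\dots,q_{n-1}$, it satisfies
\[
N(e;a_1,\dots,a_n)\;=\;\min_{\,\max(1,\,a_n-e)\,\le\, q\,\le\, a_n}\; q\cdot N\bigl(e-(a_n-q);\,a_1,\dots,a_{n-1}\bigr).
\]
This is pure bookkeeping, but it is precisely what makes the induction close, so I would set it down as a preliminary lemma.

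For the normalization, replace $P$ by its remainder upon dividing (one variable at a time) by the monic polynomials $f_i(x_i)=\prod_{a\in A_i}(x_i-a)$. This leaves the values of $P$ on $B$ unchanged, forces $\deg_{x_i}P\le a_i-1$ for all $i$, and does not raise $\deg P$, because reducing a monomial divisible by $x_i^{a_i}$ using $f_i$ replaces it by terms of strictly smaller total degree. Since $P$ does not vanish on $B$, the reduced polynomial is nonzero of degree at most $d$; by monotonicity of $N$ it is enough to prove the bound for it, so henceforth $\deg_{x_i}P\le a_i-1$ for all $i$.

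The base case $n=1$ is immediate: a nonzero univariate polynomial of degree at most $d$ has at most $d$ roots in $A_1$, hence at least $\max(1,a_1-d)=N(d;a_1)$ nonzeros. For $n\ge 2$, write $P=\sum_{i=0}^{k}x_n^i P_i(x_1,\dots,x_{n-1})$ with $P_k\ne 0$, so that $k=\deg_{x_n}P\le a_n-1$, $\deg P_k\le d-k$, and $k\le \deg P\le d$. Since $P_k$ is a nonzero polynomial with $\deg_{x_j}P_k\le a_j-1$ for $j<n$, it is nonzero at some point of $B':=\prod_{j<n}A_j$ (a one-line induction), so the inductive hypothesis applied to $P_k$ over $B'$ yields at least $N(d-k;a_1,\dots,a_{n-1})$ points $\beta\in B'$ with $P_k(\beta)\ne 0$. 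For each such $\beta$ the univariate polynomial $P(\beta,x_n)$ has leading coefficient $P_k(\beta)\ne 0$, hence degree exactly $k$, hence at least $a_n-k$ nonzeros on $A_n$; summing over these $\beta$ gives at least $(a_n-k)\cdot N(d-k;a_1,\dots,a_{n-1})$ nonzeros of $P$ on $B$. The choice $q=a_n-k$ is admissible in the recursion above, precisely because $1\le a_n-k$ (as $k\le a_n-1$) and $a_n-k\ge a_n-d$ (as $k\le d$), so this count is at least $N(d;a_1,\dots,a_n)$, completing the induction.

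The step I expect to be the main obstacle is the coupling between the combinatorial minimization defining $N$ and the induction: one has to feed in the specific value $q_n=a_n-k$ governed by $\deg_{x_n}P$ and verify it lies in the feasible window $[\max(1,a_n-d),a_n]$, which is exactly where the harmless-looking inequality $k\le d$ is used. The normalization, the base case, and the fact that a nonzero polynomial with bounded coordinate degrees is nonzero somewhere on the grid are all routine.
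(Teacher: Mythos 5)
The paper does not actually prove this statement: it is quoted from Alon and F\"{u}redi \cite{AF93} as a known tool and only ever applied through its consequence, Corollary~\ref{thm: bound}. So there is no in-paper argument to compare against, and your proposal should be judged on its own. It is correct, and it is essentially the standard proof of the Alon--F\"{u}redi theorem: the reduction of $P$ modulo the polynomials $\prod_{a\in A_i}(x_i-a)$ preserves values on $B$, caps each coordinate degree at $|A_i|-1$, and cannot raise the total degree; the recursion for $N(e;a_1,\dots,a_n)$ obtained by conditioning on $q_n$ is the right bookkeeping lemma; and in the inductive step the count $(a_n-k)\cdot N(d-k;a_1,\dots,a_{n-1})$ is correctly matched to a feasible choice $q=a_n-k$ in that recursion, using exactly the two inequalities $k\le\deg_{x_n}P\le a_n-1$ and $k\le d$ that you identify. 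The auxiliary facts you invoke (monotonicity of $N$ in $e$, and that a nonzero polynomial with $\deg_{x_j}<|A_j|$ for all $j$ is nonzero somewhere on the grid) are standard and correctly used. The only implicit hypothesis worth making explicit is that the $A_i$ are finite, which is needed both for $\prod_{a\in A_i}(x_i-a)$ to be a polynomial and for the counting to make sense; this is harmless here since the paper only uses $A_i=\mathbb{F}_3$.
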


We prove the following.

\begin{thm} \label{thm: count3}
Suppose $G$ is a multigraph with $\chi_{DP}(G) \leq 3$.  Also, suppose that $|V(G)| = n$, $|E(G)|=l$, and $2n \geq l$.  Then, $P_{DP}(G, 3) \geq 3^{n - l/2}$.
\end{thm}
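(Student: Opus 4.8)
The plan is to make the abstract's promise concrete: given a full $3$-fold cover $\mathcal{H} = (L,H,M)$ of $G$, build a polynomial $f_{G,\mathcal{H}} \in \mathbb{F}_3[x_1,\dots,x_n]$ over the box $B = \mathbb{F}_3^n$ (so $|A_i| = 3$ for every $i$) whose non-zeros on $B$ are in bijection with the $\mathcal{H}$-colorings of $G$, and then apply Theorem~\ref{thm: AandF}. The construction extends the technique of~\cite{KM20}: identify $A_v$ with $\mathbb{F}_3 = \{0,1,2\}$ for each $v \in V(G)$, so an assignment $(c(v))_{v \in V(G)} \in \mathbb{F}_3^n$ picks the vertex $(v,c(v)) \in L(v)$ of $H$; this is an $\mathcal{H}$-coloring precisely when no edge of $H$ is selected. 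For a single edge $(u,a)(v,b)$ of $H$ coming from some $e \in E_G(u,v)$, the indicator that it is \emph{not} selected — i.e. that $(x_u, x_v) \ne (a,b)$ — can be written in $\mathbb{F}_3[x_u,x_v]$ as a degree-$2$ polynomial (for instance $1 - (1-(x_u-a)^2)(1-(x_v-b)^2)$, which is $0$ exactly at $(a,b)$ and nonzero elsewhere on $\mathbb{F}_3^2$, then a standard trick collapses a whole matching into something cheaper). Taking the product of such factors over all edges of $H$ would give the right zero set but degree $\approx 2l$; the point of the algebraic refinement is that for each edge $e \in E_G(u,v)$ the matching $M(e)$ has exactly three edges (since $\mathcal{H}$ is full $3$-fold), and one can encode ``none of the three edges of $M(e)$ is selected'' by a \emph{single} polynomial in $x_u, x_v$ of degree exactly $2$ rather than $6$ — because three forbidden pairs forming a perfect matching of $\mathbb{F}_3 \times \mathbb{F}_3$ cut out a coset-like set describable by one quadratic relation. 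Multiplying these one-per-edge quadratics over all $e \in E(G)$ yields $f_{G,\mathcal{H}}$ of degree $d \le 2l$, nonvanishing somewhere on $B$ because $\chi_{DP}(G) \le 3$ guarantees an $\mathcal{H}$-coloring exists.

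The second step is the Alon–Füredi computation. With $|A_i| = 3$ for all $i$ and $d = 2l$, the constraint in Theorem~\ref{thm: AandF} is: minimize $\prod_{i=1}^n q_i$ over integers $1 \le q_i \le 3$ with $\sum q_i \ge 3n - 2l$. Since we want the smallest product subject to a lower bound on the sum, and the $q_i$ are bounded above by $3$, the extremal choice sets as many $q_i$ as possible to their minimum value $1$ and the rest to $3$: if we set $k$ of them to $3$ and $n-k$ to $1$ we need $3k + (n-k) \ge 3n - 2l$, i.e. $2k \ge 2n - 2l$, i.e. $k \ge n - l$ (here $2n \ge l$, and one should also check $n - l$ may be forced $\le 0$ in degenerate ranges, in which case the bound $3^{n-l/2} \le 1$ is trivial, but for the interesting range $0 \le n-l \le n$ everything is consistent). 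Hence the minimum product is $3^{n-l}$... which is not yet $3^{n-l/2}$, so the degree bound must actually be $d \le l$ rather than $2l$: the real content of the ``single quadratic per edge, but actually only linear contribution'' refinement is that each edge of $G$ contributes only $1$ to the total degree of $f_{G,\mathcal{H}}$ after the collapse, not $2$. Re-running the computation with $d = l$: minimize $\prod q_i$ subject to $\sum q_i \ge 3n - l$; setting $k$ coordinates to $3$ and the rest to $1$ needs $2k \ge 2n - l$, i.e. $k \ge n - l/2$, giving minimum product $3^{\lceil n - l/2 \rceil} \ge 3^{n-l/2}$, which is exactly the claimed bound. So the crux is showing the degree of $f_{G,\mathcal{H}}$ is at most $l = |E(G)|$.

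To get degree $\le l$, I would exploit the matching structure more carefully. Fix $e \in E_G(u,v)$ with $M(e) = \{(u,a_1)(v,b_1), (u,a_2)(v,b_2), (u,a_3)(v,b_3)\}$ a perfect matching between $L(u)$ and $L(v)$; so $\{a_1,a_2,a_3\} = \{b_1,b_2,b_3\} = \mathbb{F}_3$. After relabeling colors at $u$ and at $v$ (an affine change of the corresponding variable, which does not change degrees), such a matching is $x_v = \sigma(x_u)$ for some permutation $\sigma$ of $\mathbb{F}_3$; an $\mathcal{H}$-coloring avoids this matching iff $x_v \ne \sigma(x_u)$. Over $\mathbb{F}_3$ every permutation is affine, $\sigma(t) = \alpha t + \beta$ with $\alpha \in \{1,2\}$, so the forbidden condition is $x_v - \alpha x_u - \beta = 0$, a \emph{linear} form; its indicator-of-nonvanishing is $1 - (x_v - \alpha x_u - \beta)^2$... but that has degree $2$ again. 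Instead, one does not need an exact indicator: one only needs a polynomial whose zero set on $B$ is exactly the set of valid colorings. Since the valid colorings are the simultaneous non-zeros of the linear forms $\ell_e = x_{v_e} - \alpha_e x_{u_e} - \beta_e$, one per edge, by Chevalley–Warning-style reasoning over $\mathbb{F}_3$ the function that is $0$ exactly on the common zero locus of all $\ell_e$ is $\prod_e \ell_e$ raised to appropriate powers — but we want the \emph{complement}. The clean fix is: $f_{G,\mathcal{H}}(x) := \prod_{e \in E(G)} \ell_e(x)$ vanishes at $x$ iff some $\ell_e(x) = 0$ iff $x$ is \emph{not} an $\mathcal{H}$-coloring, which is backwards. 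So take instead $f_{G,\mathcal{H}} := 1 - \prod_e \ell_e$? No — that vanishes when $\prod \ell_e = 1$, also wrong. The correct object, and the place where the paper's cleverness lies, is to note $\ell_e(x)^2 \in \{0,1\}$ on $\mathbb{F}_3$ and $1 - \ell_e^2$ is the indicator of $\ell_e = 0$; then $\prod_e(1 - \ell_e^2)$ is the indicator of ``all forbidden'' and $1 - \prod_e(1-\ell_e^2)$... still not it. Rather: the indicator of a valid coloring is $\prod_e \ell_e^2$ (each factor is $1$ when $\ell_e \ne 0$, $0$ when $\ell_e = 0$), which has degree $2l$ and brings us back to $3^{n-l}$. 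Getting to $3^{n-l/2}$ therefore genuinely requires replacing ``perfect matching $\Rightarrow$ one linear form'' by a subtler algebraic identity over $\mathbb{F}_3$ that assigns half-integer ``average degree'' per edge — concretely, I expect the paper pairs up edges, or uses that the product of two affine forms in the \emph{same} variable pair can be rewritten using a lower-degree representative modulo $x_i^3 - x_i$, or uses a norm form $N(x_u, x_v)$ of degree $2$ encoding \emph{three} matchings at once and then amortizes. \textbf{Identifying and verifying this degree-halving identity is the main obstacle}; once the degree bound $\deg f_{G,\mathcal{H}} \le l$ is in hand, the rest (the reduction to counting non-zeros, the existence of one non-zero via $\chi_{DP}(G) \le 3$, and the Alon–Füredi optimization above) is routine.
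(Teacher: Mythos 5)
There is a genuine gap, but it is a self-inflicted one: you had the complete proof in hand and then talked yourself out of it with a sign-of-logic error. You correctly observe that because $\mathcal{H}$ is a full $3$-fold cover, each matching $M(e)$ between $L(u)$ and $L(v)$ is a perfect matching of $\mathbb{F}_3 \times \mathbb{F}_3$, hence given by a permutation $\sigma$ of $\mathbb{F}_3$, and that every permutation of $\mathbb{F}_3$ is affine, $\sigma(t) = \alpha t + \beta$ with $\alpha \in \{1,2\}$ (this is exactly the paper's Observation~\ref{obs: crucial}). So the ``conflict on edge $e$'' condition is the vanishing of a \emph{linear} form $\ell_e = x_v - \alpha_e x_u - \beta_e$. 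At that point you write that $f_{G,\mathcal{H}} := \prod_{e} \ell_e$ ``vanishes at $x$ iff some $\ell_e(x)=0$ iff $x$ is not an $\mathcal{H}$-coloring, which is backwards'' --- but that is not backwards, it is precisely what you need. Theorem~\ref{thm: AandF} lower-bounds the number of points where the polynomial is \emph{non-zero}, and $\prod_e \ell_e$ is non-zero at $x$ exactly when every $\ell_e(x) \neq 0$, i.e., exactly when $x$ is an $\mathcal{H}$-coloring. You never need an indicator function (which is what drags you into the degree-$2l$ constructions with $\ell_e^2$ and $1-\ell_e^2$); you only need a polynomial whose zero set on $B=\mathbb{F}_3^n$ is the complement of the set of colorings. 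The product of the $l$ linear forms does this with degree exactly $l$, one factor per edge of the multigraph (parallel edges contributing separate factors). This is the paper's construction verbatim.

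Consequently the ``degree-halving identity'' you flag as the main obstacle does not exist and is not needed: the exponent $n - l/2$ does not come from each edge contributing degree $1/2$, but from the denominator $t-1=2$ in the Alon--F\"{u}redi exponent. With $d = l$, $S = 3n$, $t = 3$, the bound is $3^{(3n-n-l)/2} = 3^{n-l/2}$, valid since $2n \geq l$ gives $S \geq n+d$, and nonvanishing of $f_{G,\mathcal{H}}$ somewhere on $B$ follows from $\chi_{DP}(G) \leq 3$, as you say. Your hand-optimization of the $q_i$'s in Theorem~\ref{thm: AandF} is essentially right (and the paper simply cites the packaged Corollary~\ref{thm: bound} from~\cite{BG22}), so once you delete the erroneous ``backwards'' objection and the ensuing detour, your argument collapses to the paper's proof.
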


As an immediate application, consider a $n$-vertex planar graph $G$ of girth at least 5. It is known (see~\cite{DP15}) that $\chi_{DP}(G) \le 3$. Since number of edges in $G$ is at most $5n/3$, Theorem~\ref{thm: count3} implies the following.

\begin{cor}\label{cor:planar}
Let $G$ be a $n$-vertex planar graph of girth at least 5, then $P_{DP}(G, 3) \geq 3^{n/6}$.
\end{cor}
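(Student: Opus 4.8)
The plan is to prove Corollary~\ref{cor:planar} by a direct appeal to Theorem~\ref{thm: count3}, after verifying the two hypotheses it requires: that $\chi_{DP}(G) \le 3$ and that $2n \ge |E(G)|$. Both follow from standard facts about planar graphs of girth at least 5, but I would spell them out so the corollary is self-contained.

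First I would recall that a simple planar graph $G$ of girth at least $5$ satisfies Euler's formula in the sharpened form $|E(G)| \le \tfrac{g}{g-2}(|V(G)|-2)$ with $g \ge 5$, hence $|E(G)| \le \tfrac{5}{3}(n-2) < \tfrac{5n}{3}$. In particular $|E(G)| \le \tfrac{5n}{3} \le 2n$, so the arithmetic hypothesis $2n \ge l$ of Theorem~\ref{thm: count3} is met. (One should note the corollary implicitly assumes $G$ is a simple graph of girth at least $5$, so no parallel edges or short cycles interfere; if $G$ has a component that is a tree or is otherwise too sparse the bound only gets better.)

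Next I would justify $\chi_{DP}(G) \le 3$. This is exactly the content of the result of Dvo\v{r}\'{a}k and Postle~\cite{DP15} cited in the excerpt: every planar graph of girth at least $5$ is DP-$3$-colorable. Alternatively one can observe that such a graph is $2$-degenerate — in any subgraph, averaging degree against the edge bound $\tfrac{5}{3}(|V|-2)$ forces a vertex of degree at most $2$ — and the excerpt already notes that a $d$-degenerate graph has $\chi_{DP} \le d+1$, giving $\chi_{DP}(G) \le 3$. Either route suffices; I would mention the degeneracy argument since it is elementary and keeps the proof self-contained.

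With both hypotheses in hand, Theorem~\ref{thm: count3} gives $P_{DP}(G,3) \ge 3^{\,n - l/2}$, and substituting the bound $l \le \tfrac{5n}{3}$ yields $n - l/2 \ge n - \tfrac{5n}{6} = \tfrac{n}{6}$. Since $P_{DP}(G,3)$ is a positive integer and $3^{t}$ is increasing in $t$, this gives $P_{DP}(G,3) \ge 3^{n/6}$, as claimed. There is no real obstacle here — the only point requiring minor care is that Theorem~\ref{thm: count3} is stated for multigraphs with $2n \ge l$, and one must check that a girth-$5$ planar graph is sparse enough to land in that regime, which the Euler bound handles with room to spare.
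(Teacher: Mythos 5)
Your main route is exactly the paper's: cite Dvo\v{r}\'{a}k--Postle for $\chi_{DP}(G)\le 3$, use Euler's formula to get $|E(G)|\le \tfrac{5}{3}(n-2)\le 2n$, and plug $l\le 5n/3$ into Theorem~\ref{thm: count3} to obtain $3^{n-l/2}\ge 3^{n/6}$. That part is correct and is precisely how the paper derives Corollary~\ref{cor:planar}.

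However, the ``elementary'' alternative you say you would include is false. A planar graph of girth at least $5$ satisfies $|E|\le\tfrac{5}{3}(|V|-2)$, which bounds the average degree by $\tfrac{10(|V|-2)}{3|V|}<\tfrac{10}{3}$; this only forces a vertex of degree at most $3$, not at most $2$. Such graphs are $3$-degenerate but not in general $2$-degenerate: the dodecahedron is a $3$-regular planar graph of girth $5$, so it has no vertex of degree at most $2$ at all. The degeneracy bound therefore only yields $\chi_{DP}(G)\le 4$, which is not enough. The statement $\chi_{DP}(G)\le 3$ for planar graphs of girth at least $5$ is a genuinely nontrivial theorem and must be cited (as the paper does), not derived from degeneracy. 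Drop the degeneracy paragraph and keep only the citation, and your proof is fine.
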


Corollary~\ref{cor:planar} improves Theorem~\ref{thm: exponent} whenever $n \geq 12$.  It was also observed in~\cite{BG22} that the same lower bound as Corollary~\ref{cor:planar} holds for $P_{\ell}(G, 3)$.

In Section~\ref{proofs}, we give the proof of Theorem~\ref{thm: count3}, demonstrate that its bound is tight, and in the process, prove the following.

\begin{thm} \label{thm: infinite}
There are infinitely many graphs $G$ for which $\chi_{DP}(G) = 3$, $P_{DP}(G,3) = P(G,3)$, and there is an $N_G \in \N$ such that $P_{DP}(G,m) < P(G,m)$ whenever $m \geq N_G$.
\end{thm}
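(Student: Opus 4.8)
The plan is to construct, for each integer $k \ge 1$, an explicit graph $G_k$ to which Theorem~\ref{thm: count3} applies in a way that determines $P_{DP}(G_k,3)$ exactly. First I would fix a base graph $G_0$: the join of a single vertex $w$ with a path $p = p_1, p_2, p_3, p_4 = q$ on four vertices (a five-vertex fan). Two properties of $G_0$ do all the work. First, $P(G_0,3) = 6$: color $w$ in one of three ways, after which the four path vertices must avoid $c(w)$ and so are $2$-colored along a path, giving exactly $2$ proper extensions. Second, in every proper $3$-coloring of $G_0$ the vertices $p$ and $q$ get \emph{distinct} colors — the path vertices use precisely the two colors other than $c(w)$, and $p,q$ are the ends of a path of odd length — even though $p$ and $q$ are non-adjacent in $G_0$. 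Now let $G_k$ be obtained from $G_0$ by adding, for each $i \in \{1,\dots,k\}$, two new vertices $x_i, y_i$ and the four edges $p x_i,\ x_i q,\ q y_i,\ y_i p$, so that $\{p, x_i, q, y_i\}$ spans a $4$-cycle through $p$ and $q$ with no chord (since $pq$ and $x_i y_i$ are non-edges).

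Next I would verify the facts needed about $G_k$. It is a simple graph with $n_k := |V(G_k)| = 5 + 2k$ and $l_k := |E(G_k)| = 7 + 4k = 2 n_k - 3$. It is $2$-degenerate: each $x_i$ and $y_i$ has degree $2$, and deleting all of them leaves $G_0$, which is $2$-degenerate; hence $\chi_{DP}(G_k) \le 3$, and since $G_k$ contains a triangle, $\chi_{DP}(G_k) = 3$. By the second property of $G_0$, each added vertex $x_i$ or $y_i$ is adjacent to both $p$ and $q$ and is therefore forced onto the one remaining color in any proper $3$-coloring, so every proper $3$-coloring of $G_0$ extends uniquely to $G_k$; thus $P(G_k,3) = P(G_0,3) = 6$. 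Since $2 n_k = 10 + 4k \ge 7 + 4k = l_k$, Theorem~\ref{thm: count3} yields $P_{DP}(G_k,3) \ge 3^{\, n_k - l_k/2} = 3^{3/2} > 5$, and since $P_{DP}(G_k,3)$ is a positive integer this gives $P_{DP}(G_k,3) \ge 6$; together with the trivial bound $P_{DP}(G_k,3) \le P(G_k,3) = 6$ we obtain $P_{DP}(G_k,3) = P(G_k,3) = 6$. (The family $\{G_k\}$ thus also witnesses that the bound of Theorem~\ref{thm: count3} cannot in general be improved.)

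To complete the proof I would observe that the edge $p x_1$ lies in no triangle of $G_k$: its endpoints have no common neighbor, since the only neighbors of $x_1$ are $p$ and $q$ and these are non-adjacent. But $p x_1$ lies in the $4$-cycle $p\, x_1\, q\, y_1$, so a shortest cycle through it has even length. Theorem~\ref{thm: evencycle} (applicable because $G_k$ is simple) then supplies an $N_{G_k} \in \N$ with $P_{DP}(G_k, m) < P(G_k, m)$ for all $m \ge N_{G_k}$. Finally $|V(G_k)| = 5 + 2k$, so $G_1, G_2, \dots$ are pairwise non-isomorphic, giving infinitely many graphs with the three asserted properties.

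The substantive part of the argument is the design of $G_0$: it must have exactly $6$ proper $3$-colorings \emph{and} satisfy $|E| = 2|V| - 3$ — these are forced, since (with $P(G,3)$ necessarily a multiple of $6$) the rounded-up lower bound of Theorem~\ref{thm: count3} can equal $P(G,3)$ only when $2|V|-|E| = 3$ and $P(G,3) = 6$ — and it must contain two non-adjacent vertices that are \emph{forced to differ} in every proper $3$-coloring. This last property is exactly what lets one attach arbitrarily many chordless $4$-cycles through the pair $\{p,q\}$ without creating any new proper $3$-coloring and while preserving $|E| = 2|V| - 3$, and a chordless $4$-cycle is exactly what Theorem~\ref{thm: evencycle} requires. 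Everything after constructing $G_0$ is routine; the only points needing care are checking that attaching the $4$-cycles alters neither $P(\cdot,3)$ nor the relation $|E| = 2|V| - 3$, and that the attached cycles really are chordless (which uses $pq \notin E(G_k)$ and $x_i y_i \notin E(G_k)$).
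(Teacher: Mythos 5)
Your proof is correct and follows essentially the same strategy as the paper's: a uniquely $3$-colorable, $2$-degenerate graph with $|E| = 2|V| - 3$, so that Theorem~\ref{thm: count3} forces $P_{DP}(G,3) \ge 3^{3/2} > 5$ and hence $P_{DP}(G,3) = 6 = P(G,3)$, combined with an edge whose shortest containing cycle has length $4$ so that Theorem~\ref{thm: evencycle} applies. The paper's family ($K_1 \vee P_{2k+2}$ plus one extra vertex joined to the two path endpoints) differs from yours only cosmetically, growing the path rather than attaching many degree-$2$ vertices to a fixed fan.
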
  

\section{Proofs of Results}\label{proofs}

From this point forward whenever $G$ is a multigraph with $n$ vertices, we suppose that $V(G) = \{v_1, \ldots, v_n\}$.  We will also suppose, unless otherwise noted, that all addition and multiplication is performed in $\mathbb{F}_3$ where $\mathbb{F}_3$ is the finite field of order 3.  Suppose $\mathcal{H} = (L,H,M)$ is a full 3-fold cover of the multigraph $G$ on $n$ vertices.  From this point forward, we will always assume under this set up that $L(v) = \{(v,j) : j \in \mathbb{F}_3 \}$.  Suppose $e$ is an arbitrary element of $E(G)$ with endpoints $v_i$ and $v_j$ where $i < j$.  Also, for each $k \in \mathbb{F}_3$, suppose the edge in $M(e)$ with endpoint $(v_i,k)$ has $(v_j,c_k)$ as its other endpoint. The \emph{permutation of $\mathcal{H}$ associated with $M(e)$}, denoted $\sigma_{e}^{\mathcal{H}}$, is the permutation $\sigma_e^{\mathcal{H}} : \mathbb{F}_3 \rightarrow \mathbb{F}_3$ given by $\sigma_e^{\mathcal{H}}(k)=c_k$.  We will now associate a polynomial in $\mathbb{F}_3[x_1, \ldots, x_n]$ with $\mathcal{H}$.  Before we do this, we need an observation that was specifically used for full 3-fold covers in~\cite{KM20}.

\begin{obs} \label{obs: crucial}
Suppose $\sigma$ is a permutation of $\mathbb{F}_3$.  Then, either $z - \sigma(z)$ is the same for all $z \in \mathbb{F}_3$, or $z + \sigma(z)$ is the same for all $z \in \mathbb{F}_3$.
\end{obs}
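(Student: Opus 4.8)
The plan is to exploit the fact that there are only $3! = 6$ permutations of $\mathbb{F}_3$, and that each one of them is an affine map. First I would record that every map of the form $z \mapsto az+b$ with $a \in \{1,2\}$ and $b \in \mathbb{F}_3$ is a permutation of $\mathbb{F}_3$ (since $a$ is a unit of $\mathbb{F}_3$), that these $2 \cdot 3 = 6$ maps are pairwise distinct, and that $\mathbb{F}_3$ has exactly $6$ permutations. Hence every permutation $\sigma$ of $\mathbb{F}_3$ is of the form $\sigma(z) = az + b$ for a (unique) choice of $a \in \{1,2\}$ and $b \in \mathbb{F}_3$. A reader who prefers can skip this step entirely and simply enumerate the six permutations of $\{0,1,2\}$ by hand.

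Then I would split on the value of $a$. If $a = 1$, then $z - \sigma(z) = z - (z+b) = -b$ for every $z \in \mathbb{F}_3$, so $z - \sigma(z)$ takes the same value for all $z$. If $a = 2$, then, using $2 = -1$ in $\mathbb{F}_3$, we have $\sigma(z) = -z + b$, so $z + \sigma(z) = b$ for every $z \in \mathbb{F}_3$, and thus $z + \sigma(z)$ takes the same value for all $z$. Since these two cases account for all permutations of $\mathbb{F}_3$, the observation follows.

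There is no genuine obstacle here: the statement is a finite verification, and the only ``idea'' involved is the identification of the symmetric group on $\mathbb{F}_3$ with the one-dimensional affine group $\mathrm{AGL}(1,3)$ — equivalently, the bookkeeping remark that the six affine maps $z \mapsto az+b$ already exhaust all permutations. I therefore expect the author's proof to amount to one of the two short arguments above (the affine-map classification, or a brute-force check over the six permutations).
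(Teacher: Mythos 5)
Your proof is correct. The paper states this as an Observation with no written proof (it is credited as a fact used in \cite{KM20} and left as an immediate finite verification), and your affine-map argument --- every permutation of $\mathbb{F}_3$ is $z \mapsto az+b$ with $a \in \{1,-1\}$, giving the two cases directly --- is a clean and complete way of carrying out exactly the check the authors implicitly rely on.
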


Now, let the \emph{linear factor of $\mathcal{H}$ associated with $M(e)$}, denoted $l_e^{\mathcal{H}}(x_1, \ldots, x_n)$, be the polynomial in $\mathbb{F}_3[x_1, \ldots, x_n]$ given by $(x_i + (-1)^c x_j - a)$ where $c$ and $a$ are chosen so that $(x_i + (-1)^c x_j - a)$ is zero if and only if $x_i, x_j$ are elements of $\mathbb{F}_3$ satisfying $\sigma_{e}^{\mathcal{H}}(x_i) = x_j$.  Notice that Observation~\ref{obs: crucial} guarantees that such a $c$ and $a$ must exist~\footnote{It is also not too difficult to prove that $c$ and $a$ are unique by considering each possible matching.}.  Finally, we let the \emph{graph polynomial of $G$ associated with $\mathcal{H}$}, denoted $f_{G, \mathcal{H}}(x_1, \ldots, x_n)$, be the polynomial in $\mathbb{F}_3[x_1, \ldots, x_n]$ given by
$$f_{G, \mathcal{H}}(x_1, \ldots, x_n) =  \prod_{E_G(v_i,v_j) \neq \emptyset, \; i<j} \left (\prod_{e \in E_G(v_i,v_j)} l_e^{\mathcal{H}}(x_1, \ldots, x_n) \right).$$   
Notice that $f_{G, \mathcal{H}}$ is a polynomial of degree $|E(G)|$.  Also, by construction, the following observation is immediate.

\begin{obs} \label{obs: count}
Let $\mathcal{C} = \{ C \subset V(H): \text{$|C \cap L(v)|=1$ for each $v \in V(G)$} \}$, and note that all $\mathcal{H}$-colorings of $G$ are contained in $\mathcal{C}$.  Suppose $C \in \mathcal{C}$ and $C = \{(v_1, c_1), \ldots, (v_n,c_n) \}$.  Then, $C$ is an $\mathcal{H}$-coloring of $G$ if and only if $f_{G, \mathcal{H}}(c_1, \ldots, c_n) \neq 0$.  Consequently, if $B = \prod_{i=1}^n \mathbb{F}_3$, then the number of points in $B$ for which $f_{G, \mathcal{H}}$ has a non-zero value is $P_{DP}(G, \mathcal{H})$.  
\end{obs}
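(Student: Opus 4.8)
The plan is simply to unwind the chain of definitions connecting $H$, the matchings $M(e)$, the permutations $\sigma_e^{\mathcal{H}}$, and the linear factors $l_e^{\mathcal{H}}$, and then to use that $\mathbb{F}_3$ is a field.

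First I would fix $C=\{(v_1,c_1),\dots,(v_n,c_n)\}\in\mathcal{C}$. Since $C$ meets each $L(v_i)$ in exactly one vertex and $H[L(v_i)]$ is complete, $C$ spans no edge lying inside a single $L(v_i)$; recalling from the definitions that an independent set of $H$ meeting each $L(v)$ in exactly one vertex is precisely an $\mathcal{H}$-coloring, it follows that $C$ is an $\mathcal{H}$-coloring of $G$ if and only if for every pair $i<j$ there is no edge of $H$ between $(v_i,c_i)$ and $(v_j,c_j)$. By condition (3) in the definition of a cover, the edges of $H$ between $L(v_i)$ and $L(v_j)$ are exactly those in $\bigcup_{e\in E_G(v_i,v_j)}M(e)$, so this condition says: $(v_i,c_i)(v_j,c_j)\notin M(e)$ for every $i<j$ and every $e\in E_G(v_i,v_j)$.

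Next I would rewrite each such non-membership as the non-vanishing of a linear factor. By the definition of $\sigma_e^{\mathcal{H}}$, the pair $(v_i,c_i)(v_j,c_j)$ is an edge of $M(e)$ exactly when $\sigma_e^{\mathcal{H}}(c_i)=c_j$, and by the (existence-and-uniqueness) definition of $l_e^{\mathcal{H}}$ this in turn holds exactly when $l_e^{\mathcal{H}}(c_1,\dots,c_n)=0$. Hence $C$ is an $\mathcal{H}$-coloring of $G$ if and only if $l_e^{\mathcal{H}}(c_1,\dots,c_n)\neq 0$ for every factor $l_e^{\mathcal{H}}$ appearing in $f_{G,\mathcal{H}}$; since a product over the field $\mathbb{F}_3$ is nonzero precisely when each factor is nonzero, this is equivalent to $f_{G,\mathcal{H}}(c_1,\dots,c_n)\neq 0$, giving the first claim.

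For the final sentence I would observe that $C\mapsto(c_1,\dots,c_n)$ is a bijection from $\mathcal{C}$ onto $B=\prod_{i=1}^n\mathbb{F}_3$ and that every $\mathcal{H}$-coloring of $G$ lies in $\mathcal{C}$; by the equivalence just established, this bijection restricts to a bijection between the set of $\mathcal{H}$-colorings of $G$ and the set of points of $B$ at which $f_{G,\mathcal{H}}$ does not vanish, so these sets are equinumerous, and the former has size $P_{DP}(G,\mathcal{H})$ by definition. I do not expect any genuine obstacle here: the whole argument is bookkeeping with the definitions. The one point worth stating carefully is that $l_e^{\mathcal{H}}$ detects \emph{exactly} the pairs matched by $M(e)$, which is precisely what Observation~\ref{obs: crucial} (together with the noted uniqueness of $c$ and $a$) guarantees.
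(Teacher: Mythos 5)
Your proposal is correct and is exactly the definition-unwinding that the paper has in mind when it says the observation is "immediate by construction" (the paper gives no explicit proof): condition (3) reduces independence to avoiding each $M(e)$, the linear factor $l_e^{\mathcal{H}}$ detects membership in $M(e)$, and $\mathbb{F}_3$ being a field makes the product nonzero iff every factor is. No gaps; the only implicit point, which you handle correctly, is that fullness of the cover makes each $M(e)$ a perfect matching so that $\sigma_e^{\mathcal{H}}$ is a well-defined permutation.
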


It is easier to apply the following corollary of Theorem~\ref{thm: AandF}.

\begin{cor} [\cite{BG22}] \label{thm: bound}
Let $\mathbb{F}$ be an arbitrary field, let $A_1$, $A_2$, $\ldots$, $A_n$ be any non-empty subsets of $\mathbb{F}$, and let $B = \prod_{i=1}^n A_i$.  Suppose that $P \in \mathbb{F}[x_1, \ldots, x_n]$ is a polynomial of degree $d$ that does not vanish on all of $B$.  If $S = \sum_{i=1}^n |A_i|$, $t = \max |A_i|$, $S \geq n + d$, and $t \geq 2$, then the number of points in $B$ for which $P$ has a non-zero value is at least $t^{(S-n-d)/(t-1)}.$
\end{cor}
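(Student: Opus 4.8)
The plan is to deduce Corollary~\ref{thm: bound} directly from the Alon--F\"uredi bound of Theorem~\ref{thm: AandF}. By that theorem, the number of points of $B$ at which $P$ is non-zero is at least $\min \prod_{i=1}^{n} q_i$, where the minimum is taken over all integer tuples $(q_1, \ldots, q_n)$ with $1 \le q_i \le |A_i|$ and $\sum_{i=1}^{n} q_i \ge -d + S$ (recall $S = \sum_{i=1}^n |A_i|$). So it suffices to show that \emph{every} such tuple satisfies $\prod_{i=1}^{n} q_i \ge t^{(S-n-d)/(t-1)}$.

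The one ingredient I would isolate first is an elementary single-variable inequality: if $t \ge 2$ and $0 \le r \le t-1$, then $1 + r \ge t^{\,r/(t-1)}$. This follows from convexity --- the function $r \mapsto t^{\,r/(t-1)}$ equals $e^{r\ln(t)/(t-1)}$ and is therefore convex on $\R$, and it agrees with the affine function $r \mapsto 1 + r$ at the two endpoints $r = 0$ and $r = t-1$, so on the interval $[0, t-1]$ the convex function stays at or below the secant line. The hypothesis $t \ge 2$ is exactly what makes this interval nondegenerate.

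Given the inequality, the rest is a short computation. For a feasible tuple $(q_1, \ldots, q_n)$, set $e_i = q_i - 1$, so that $0 \le e_i \le |A_i| - 1 \le t - 1$. Applying the inequality coordinatewise and multiplying,
$$\prod_{i=1}^{n} q_i = \prod_{i=1}^{n}(1 + e_i) \ \ge\ \prod_{i=1}^{n} t^{\,e_i/(t-1)} \ =\ t^{\left(\sum_{i=1}^{n} e_i\right)/(t-1)} \ =\ t^{\left(\sum_{i=1}^{n} q_i - n\right)/(t-1)} \ \ge\ t^{(S - n - d)/(t-1)},$$
where the final step uses $\sum_i q_i \ge S - d$ together with $t > 1$; the hypothesis $S \ge n + d$ guarantees the exponent is non-negative, so the bound is meaningful. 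Combining this with Theorem~\ref{thm: AandF} gives the corollary.

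I do not expect a genuine obstacle here; the proof is essentially bookkeeping on top of Theorem~\ref{thm: AandF}. The only place to be careful is the exponent arithmetic --- making sure that $e_i \le t-1$ (not $t$), so that the coordinatewise bound is legitimate, and that the accumulated exponent comes out as $(S-n-d)/(t-1)$ rather than an off-by-one variant. An alternative route would be to describe the minimizing tuple explicitly (push as many coordinates as possible to the value $t$, leaving at most one intermediate value, and bound the resulting $t^k(1+r)$ from below), but the coordinatewise convexity argument avoids that case analysis and is what I would write up.
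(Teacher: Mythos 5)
Your proof is correct. Note that the paper itself gives no argument for this corollary --- it is simply quoted from \cite{BG22} --- so there is nothing internal to compare against; your derivation from Theorem~\ref{thm: AandF} via the pointwise bound $1+r \ge t^{r/(t-1)}$ for $0 \le r \le t-1$ (convexity against the secant line) is the standard one, and the bookkeeping, including the step $e_i \le |A_i|-1 \le t-1$ and the monotonicity of $t^x$ used in the final inequality, all checks out.
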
 

We are now ready to complete the proof of Theorem~\ref{thm: count3}.

Suppose $V(G) = \{v_1, \ldots, v_n \}$ and $\mathcal{H} = (L,H,M)$ is a full 3-fold cover of $G$ with $P_{DP}(G, \mathcal{H}) = P_{DP}(G,3)$.  Suppose also that the vertices of $H$ are arbitrarily named so that $L(v) = \{(v,j) : j \in \mathbb{F}_3 \}$ for each $v \in V(G)$.  Let $A_i = \mathbb{F}_3$ for each $i \in [n]$, and let $B = \prod_{i=1}^n A_i$.  Note that $f_{G, \mathcal{H}}(x_1, \ldots, x_n) \in \mathbb{F}_3[x_1, \ldots, x_n]$ has degree $l$.  Moreover, $f_{G, \mathcal{H}}(x_1, \ldots, x_n)$ doesn't vanish on all of $B$ by the fact that $\chi_{DP}(G) \leq 3$ and Observation~\ref{obs: count}.  

Thus, the number of points in $B$ for which $f_{G, \mathcal{H}}(x_1, \ldots, x_n)$ has a non-zero value is at least $3^{(3n-n-l)/(3-1)}$ by Corollary~\ref{thm: bound}.  The theorem then follows from Observation~\ref{obs: count}.

We now mention three simple examples that demonstrate the tightness of Theorem~\ref{thm: count3}.  First, notice that if $G_1$ is an edgeless graph on $n$ vertices, then $P_{DP}(G_1,3) = 3^n$.  Second, suppose that $G_2$ is a multigraph on 2 vertices with 2 edges.  Then, it easy to see that $P_{DP}(G_2,3) = 3$ (see Proposition~11 in~\cite{M22}).  Finally, suppose that $k \in \N$ and $G_3 = K_1 \vee C_{2k+2}$.  Notice that $|V(G_3)| = 2k+3$ and $|E(G_3)| = 4k+4$.  It is shown in~\cite{BH21} that $P_{DP}(G_3,3)=3$.  It is also worth mentioning that Theorem~\ref{thm: count3} tells us that $P_{DP}(G_3,3) \geq 3$ in a manner that is much more elegant than the result used in~\cite{BH21} to demonstrate the same lower bound (see Lemma~16 in~\cite{BH21}).  

We now turn our attention to proving Theorem~\ref{thm: infinite}.  First, we need a definition and a result.  A graph $G$ is said to be \emph{uniquely $k$-colorable} if there is only one partition of its vertex set into $k$ independent sets.  It is well known (see~\cite{T81}) that if $G$ is a uniquely $k$-colorable graph with $n$ vertices, then $|E(G)| \geq (k-1)n - k(k-1)/2$.  It is also easy to observe that when $G$ is uniquely $k$-colorable, $\chi(G) = k$ and $P(G,k) = k!$.  With this in mind, we have the following lemma.

\begin{lem} \label{lem: unique}
Suppose $G$ is a uniquely $3$-colorable graph on $n$ vertices with $\chi_{DP}(G)=3$.  If $|E(G)| = 2n - 3$, then $P_{DP}(G,3)=P(G,3)$.
\end{lem}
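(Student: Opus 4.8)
The plan is to sandwich $P_{DP}(G,3)$ between the two bounds we have available. On one side, since $G$ is uniquely $3$-colorable we have $\chi(G)=3$ and $P(G,3)=3!=6$, and since $P_{DP}(G,3)\le P(G,3)$ this gives $P_{DP}(G,3)\le 6$. On the other side, the hypotheses $\chi_{DP}(G)\le 3$ and $|E(G)|=2n-3\le 2n$ let us invoke Theorem~\ref{thm: count3} with $l=2n-3$, yielding
$$P_{DP}(G,3)\ \ge\ 3^{\,n-l/2}\ =\ 3^{\,n-(2n-3)/2}\ =\ 3^{\,3/2}\ =\ 3\sqrt{3}\ >\ 5.$$
So $P_{DP}(G,3)$ is an integer strictly between $5$ and $6$ inclusive on the upper end, hence $P_{DP}(G,3)=6=P(G,3)$, which is exactly the claim.

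The only subtlety is that $3^{n-l/2}$ need not be an integer (indeed here it is $3\sqrt 3$), so one must be a little careful about how Theorem~\ref{thm: count3} is applied. The cleanest route is to go back to the source: Corollary~\ref{thm: bound} (equivalently Theorem~\ref{thm: AandF}) guarantees that the number of non-zeros of $f_{G,\mathcal H}$ is at least $\min\prod q_i$ over integer tuples with $1\le q_i\le 3$ and $\sum q_i\ge 3n-(2n-3)=n+3$. Since each $q_i\le 3$, to reach the sum $n+3$ at least three of the $q_i$ must exceed $1$ (if only two did, the maximum sum would be $(n-2)\cdot 1 + 2\cdot 3 = n+4$, which is fine, but the product is then at least $1\cdots 1\cdot 3\cdot 2 = 6$; if only one or zero exceeded $1$ the sum constraint fails). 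A short case check — exactly two of the $q_i$ equal to $3,3$ (product $9$); exactly three equal to $2,2,2$ (product $8$); or $q_i=3,2$ and a $2$, etc. — shows the minimum product is $6$, attained only by the pattern $(\ldots,1,2,2,2,\ldots)$ up to permutation wait, $(1,\ldots,1,2,2,2)$ sums to $n+3$ and has product $8$; $(1,\ldots,1,2,3)$ sums to $n+3$ with product $6$. Either way the minimum is $6$. Thus $P_{DP}(G,3)\ge 6$ directly, and combined with $P_{DP}(G,3)\le P(G,3)=6$ we conclude equality.

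I expect the main (and only real) obstacle to be this optimization of $\prod q_i$ subject to $\sum q_i\ge n+3$, $1\le q_i\le 3$: one needs to observe that the product is minimized by keeping as many $q_i$ equal to $1$ as possible and distributing the remaining "budget" of $3$ over as few coordinates as possible, i.e. $(1,\dots,1,3,2)$, giving the value $6$. This is the place where the specific number $|E(G)|=2n-3$ is used — it is exactly calibrated so that the Alon--F\"uredi bound lands on $6$ rather than something smaller, and the uniquely-$3$-colorable hypothesis pins $P(G,3)$ to the matching value $6$. Everything else (the inequality $P_{DP}\le P$, the evaluation $P(G,3)=3!$) is immediate from the definitions recalled just before the lemma.
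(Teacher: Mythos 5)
Your proof is correct and matches the paper's argument exactly: $P(G,3)=6$ bounds $P_{DP}(G,3)$ from above, while Theorem~\ref{thm: count3} gives $P_{DP}(G,3)\ge 3^{3/2}>5$, and integrality forces $P_{DP}(G,3)\ge 6$. The ``subtlety'' you worry about is not one --- a non-integer lower bound on an integer quantity can simply be rounded up --- so your detour back to the Alon--F\"{u}redi optimization, though it does correctly yield the bound $6$ directly, is unnecessary.
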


\begin{proof}
Clearly, $P(G,3) = 6$ and $P_{DP}(G,3) \leq P(G,3)$.  Since $\chi_{DP}(G)=3$, Theorem~\ref{thm: count3} implies that $P_{DP}(G,3) \geq 3^{n - (2n-3)/2} = 3^{3/2}$.  Since $P_{DP}(G,3)$ is an integer, this means $P_{DP}(G,3) \geq 6$.
\end{proof}

We are now ready to prove Theorem~\ref{thm: infinite}

\begin{proof}
For each $k \in \N$, let $H_k$ be the graph obtained from a copy of $K_1 \vee P_{2k+2}$ by adding a new vertex $z$ and then adding an edge between $z$ and each endpoint of the copy of $P_{2k+2}$.  To prove Theorem~\ref{thm: infinite}, we will show that for each $k \in \N$, $P_{DP}(H_k,3)=P(H_k,3)$, and there is an $N_{H_k} \in \N$ such that $P_{DP}(H_k,m) < P(H_k,m)$ whenever $m \geq N_{H_k}$.

Now, fix a $k \in \N$.  It is easy to see that $H_k$ is both 2-degenerate and uniquely 3-colorable.  Consequently, $\chi_{DP}(H_k) = 3$.  Moreover, $|V(H_k)| = 2k+4$ and $|E(H_k)| = 4k+5 = 2(2k+4)-3$.  So, Lemma~\ref{lem: unique} implies that $P_{DP}(H_k,3)=P(H_k,3)$.

Finally, notice that each of the two edges in $H_k$ incident to $z$ have the property that the smallest cycle in $H_k$ containing the edge is of length 4.  So, Theorem~\ref{thm: evencycle}
implies there is an $N_{H_k} \in \N$ such that $P_{DP}(H_k,m) < P(H_k,m)$ whenever $m \geq N_{H_k}$.    
\end{proof}

\end{document}